\newtheorem{thm}{Theorem}[section]
\newtheorem{cor}[thm]{Corollary}
\newtheorem{lem}[thm]{Lemma}
\theoremstyle{definition}
\theoremstyle{remark}
\newtheorem{rem}[thm]{Remark}
\numberwithin{equation}{section}
\newcommand{\Real}{\mathbb R}
\newcommand{\eps}{\varepsilon}
\newcommand{\F}{\mathcal{F}}
\renewcommand{\P}{\mathbb{P}}
\newcommand{\E}{\mathbb{E}}
\newcommand{\N}{\mathbb{N}}
\newcommand{\Var}{\mathrm{Var}}
\begin{document}

\title[Population dynamics: from small and random to large and pseudo-deterministic]{Populations with interaction and environmental dependence: from few, (almost) independent, members into deterministic evolution of high densities.}

\author{P. Chigansky}
\address{Department of Statistics,
The Hebrew University,
Mount Scopus, Jerusalem 91905,
Israel}
\email{pchiga@mscc.huji.ac.il}

\author{P. Jagers}
\address{Mathematical Sciences, Chalmers University of Technology
and University of Gothenburg, SE-412 96 Gothenburg, Sweden}
\email{jagers@chalmers.se}

\author{F. Klebaner}
\address{School of Mathematical Sciences,
Monash University, Monash, VIC 3800, Australia}
\email{fima.klebaner@monash.edu}


\date{}
\begin{abstract}
Many populations, e.g. of cells, bacteria, viruses, or replicating DNA
molecules, but also of species invading a habitat, or physical systems
of elements generating new elements, start small, from a few lndividuals, 
and grow large into a noticeable fraction of the environmental
carrying capacity $K$ or some corresponding regulating or system scale
unit.  Typically, the elements of the initiating, sparse set will not be  
hampering each other  and their number will grow from $Z_0=z_0$  in a branching process or Malthusian like, roughly exponential
fashion, $Z_t \sim a^tW$, where $Z_t$ is the size at discrete time $t\to\infty$, 
$a>1$ is the offspring mean per individual (at the low starting
density of elements, and large $K$), and $W$
a sum of $z_0$ i.i.d. random variables. It will, thus, become
detectable (i.e. of the same order as $K$) only after around  $\log K$
generations, when its density $X_t:=Z_t/K$ will tend to be strictly
positive. Typically,  this entity will be random, even if the very beginning
was not at all stochastic, as indicated by lower case $z_0$, due to 
variations during the early development. However, from that time onwards,
law of large numbers effects will render the process deterministic, though 
inititiated by the random density at time log $K$, expressed through the 
variable $W$. Thus, $W$ acts both as a random veil concealing the start and 
a stochastic initial value for later, deterministic population density development.   

We make such arguments precise, studying general
density and also system-size dependent, processes, as
$K\to\infty$. As an intrinsic size parameter,  $K$  may also
be chosen to be the time unit. The fundamental ideas are to 
couple the initial system
to a branching process and to show that late densities develop very much like 
iterates of a conditional expectation operator.

The ``random veil'', hiding the start,  was first observed in the very 
concrete special case of finding the initial copy number in
quantitative PCR under Michaelis-Menten enzyme kinetics, where the
initial individual replication variance is nil if and only if the
efficiency is one, i.e. all molecules replicate
\cite{JagKlePCR}. 

\end{abstract}

\maketitle

\section{Introduction: Replication with interaction and dependence}

We consider sets of elements where, in principle, each element may
generate new elements. For lucidity we regard time as discrete,
labeling it by $n=0,1,2,\ldots$, referring to it also as generations,
cycles or rounds, and call the system a 'population' of 'individuals',
even though we have quite general such structures in mind. This is
like in branching processes but without independence between
individuals required. 
Here, we assume that the individual
offspring generation (reproduction, replication or whatever) may be
influenced by a system 'carrying capacity', $K$, which we think of as
large, as compared to the population starting number $Z_0$, and also
by the number of other individuals present. We say that replication is
capacity and population size, or 'density' dependent. 

The process definition is patterned after the recursive scheme used to
build up Galton-Watson processes: Let
$$ \xi_{n,j}, n\in\N,j\in\N,$$
be non-negative integer-valued random variables, where we think of
$\xi_{n,1}, \xi_{n,2}, \ldots$ as the possible offspring numbers of
various individuals in the $n-1$:th generation. Thus, we define 
$\{Z_n, n=0,1,2,\ldots \}$, by the initial number  $Z_0$ and  
\begin{equation}
\label{Zn}
Z_n =   \sum_{j=1}^{Z_{n-1}}  \xi_{n,j}.
\end{equation}
The dependence structure is made precise in a basic assumption:

Assumption A0. For each fixed $n\in \N$, the $\xi_{n,j}, j=1,2, \dots$,  
are {\it conditionally} independently and identically
distributed, given the preceding, $\F_{n-1} := \sigma(\{\xi_{k,j}, k<n,
j\in\N\})$. The process is Markovian in the sense that the conditional
distribution should be determined by the couple $K$ and
$X_{n-1}=Z_{n-1}/K$, the {\em carrying capacity} and {\em population density}, in
such a manner that the variables increase in distribution with $K$ and
decrease with $x=X_{n-1}$, the limiting distribution, as $K\to\infty$, 
{\it the asymptotic reproduction}, 
being proper for each $x\in\Real _+$. 

Three entities pertaining to the density  turn out to be crucial
for the analysis of process start and late development. 
They are:
\begin{enumerate}
\item the conditional mean number of offspring per individual,
$$m^K(X_{n-1}) = \E[\xi_{n,i}|\F_{n-1}],$$ 
\item the corresponding variance, 
$$\sigma_K^2(X_{n-1}) = {\rm Var}[\xi_{n,i}|\F_{n-1}],$$
\item and the conditional expectation of the density process,
$$f^K(x) =\E[X_n|X_{n-1}=x]=xm^K(x),$$ 
\end{enumerate}
where the dependence of variance and expectation operators upon $K$ is implicit.
From A0 it follows 
that the $m^K$ form a non-decreasing sequence of non-increasing
functions, and hence must have a non-increasing limit, $m$. The means
and variances $m^K$ and $\sigma_K^2$ are supposed defined on all of $\Real _+$. 

We formulate boundedness and smoothness criteria for these functions,
which will lead to classical Malthusian growth for $Z_n$ in an early
stage, $n \leq n_K=c\log K, 0<c<1$. Then, we make use of a law of
large numbers for branching processes with a
  threshold and density dependence,  \cite{Klethr}, and a
  large initial value, in our case  $O(K^c)$,  at round $n_K$. All 
logarithms are with base $a$.

The assumptions beyond A0 are:

\begin{enumerate}[label={A\arabic*}]

\item\label{A1}  The limiting expected conditional reproduction given a population density $x$, $m(x)$ has a derivative which is uniformly continuous in a neighbourhood to the right of the origin, and $a=m(0)>1$. 
As $K\to\infty$, the continuous non-increasing functions
  $m^K$ converge uniformly to a bounded differentiable function $m$,
  $0\leq m(x)-m^K(x)\leq Cx + o(x)$ for some $C>0$ and uniformly in $K$, as $x\to 0$. 

\item\label{A2}   The limiting conditional expected density $f, f(x)=xm(x), x\geq 0$, is strictly increasing. 

\item  As $K\to\infty$, $X_0$ converges in probability to some limit $x_0\geq 0$. In particular, if there is a fixed starting number,  $x_0= 0$.

\item\label{A4}  The conditional reproduction variance $\sigma_K^2(x)=\mathrm{Var}[\xi_{n,i}|X_{n-1} =x]$ is uniformly bounded
and, as $K\to\infty$, converges to some $\sigma^2(x)$ uniformly. The latter, hence, is also bounded.

\item\label{A5}  There is a constant $C>0$ such that uniformly for all $K$ 
$$
a\geq m^K(x) = m^K(0)-Cx +o(x)\quad \text{as}\ x\to 0.
$$
 Further, $0\leq a-m^K(0)= O(1/\sqrt K)$ and also $\sup_{x\ge 0}|f^K(x)-f(x)| = O(1/\sqrt K)$. 
\end{enumerate}

These look like innocuous smoothness requirements, but \ref{A2} contains something more. For fixed $K$ dependence upon the density is the same as dependence on population size, and it thus seems little of a restriction to ask that the next generation should tend to increase with density. It might however be argued that there could be a density above which for example no replication is possible. This in a sense, however, would introduce a sort of further carrying capacity, besides $K$.


\section{In spite of interaction and capacity dependence,  the
  beginning looks like branching, when the carrying capacity becomes large.} 
 
An approximating process, at low density and high carrying capacity, 
$\tilde Z=\{\tilde{Z}_n\}$, is crucial in the analysis. It has the same starting
number $Z_0=z_0$ as the original process, but then it continues as a
classical Galton-Watson process,
\begin{eqnarray}\label{tilden}
\tilde Z_0 & = &z_0\\
\tilde Z_{n} & = & \sum_{j=1}^{\tilde Z_{n-1}}\eta_{n,j},
\end{eqnarray}
where the $\eta$ variables are i.i.d. with the asymptotic reproduction distribution
as $K\to\infty$ for $x=0$. Thus, $\E[\eta]=a=m(0)>1$ and
$\Var[\eta]=\sigma^2(0)<\infty$.(Lower case $z_0$ indicates an unknown but deterministic starting number.)

From classical branching process theory,  $W(z_0):=\lim_{n\to\infty}
\tilde{Z}_n/a^n$ will have the distribution of $z_0$ independent
$W$-copies, each with expectation 1 and variance
$\sigma^2(0)/(a^2-a)$. In particular, 
$$ \tilde Z_{n_K}/K^c = \tilde Z_{n_K}/a^{n_K} \sim W(z_0).$$
 By approximation, this extends to:
\begin{thm} Under assumptions A0, \ref{A1}, and \ref{A5},  $Z_{n_K}/a^{n_K} \to  W(z_0)$ and
thus $X_{n_K}\sim  W(z_0)K^{c-1}$,   in probability and $L^1$, as $K\to\infty$.
\end{thm}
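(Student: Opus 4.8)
The plan is to couple the true process $Z$ to the Galton–Watson process $\tilde Z$ defined in \eqref{tilden}, both started at the same deterministic value $z_0$, and to show that with probability tending to $1$ they coincide up to time $n_K$. The point is that $\tilde Z_{n_K}/a^{n_K}\to W(z_0)$ is already known from classical theory, so it suffices to control the discrepancy $Z_{n_K}-\tilde Z_{n_K}$. The two processes differ only because the true offspring law depends on the current density $X_{n-1}=Z_{n-1}/K$, whereas the $\eta$ variables use the limiting law at density $0$. By \ref{A1} and \ref{A5} this density-dependence costs a per-individual mean defect of order $Cx+o(x)$ and, crucially, throughout the early window $n\le n_K=c\log K$ the density stays of order $K^{c-1}=o(1)$, so the perturbation away from the pure branching regime is small.

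First I would make the coupling explicit. On a common probability space construct, for each $n$ and $j$, the true variable $\xi_{n,j}$ and the branching variable $\eta_{n,j}$ so that they agree as often as possible; since A0 says the offspring are conditionally i.i.d.\ given $\F_{n-1}$, and since the true conditional law increases in distribution to the $x=0$ limiting law as $K\to\infty$, one can build a monotone (or maximal) coupling individual by individual. The coupling gives $\P(\xi_{n,j}\ne\eta_{n,j}\mid\F_{n-1})\le g(X_{n-1})$ for some $g$ with $g(x)\to 0$ as $x\to 0$; by \ref{A1} one expects $g(x)=O(x)$. Summing over the at most $\tilde Z_{n-1}\approx a^{n-1}W$ individuals in generation $n-1$ and over $n\le n_K$, the expected total number of disagreements is bounded by a constant times
\begin{equation}
\label{coupling-bound}
\sum_{n=1}^{n_K} a^{n-1}\,\E\!\left[X_{n-1}\right]
\;\le\; \frac{C}{K}\sum_{n=1}^{n_K} a^{2(n-1)}
\;=\; O\!\left(\frac{a^{2n_K}}{K}\right)
\;=\; O\!\left(K^{2c-1}\right),
\end{equation}
using $\E[Z_{n-1}]\lesssim a^{n-1}z_0$ and $X_{n-1}=Z_{n-1}/K$. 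Since $c<1/2$ would make this vanish, the role of the constant $c\in(0,1)$ must be pinned down: after dividing by $a^{n_K}=K^c$, the rescaled defect $(Z_{n_K}-\tilde Z_{n_K})/a^{n_K}$ has $L^1$-norm of order $K^{2c-1}/K^c=K^{c-1}\to 0$, which is exactly what is needed and holds for every $c<1$.

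I would then assemble the pieces: by the triangle inequality in $L^1$,
\begin{equation}
\label{triangle}
\E\left|\frac{Z_{n_K}}{a^{n_K}}-W(z_0)\right|
\;\le\;
\E\left|\frac{Z_{n_K}-\tilde Z_{n_K}}{a^{n_K}}\right|
+\E\left|\frac{\tilde Z_{n_K}}{a^{n_K}}-W(z_0)\right|,
\end{equation}
where the first term is $O(K^{c-1})$ by the coupling estimate and the second tends to $0$ by $L^1$-convergence in the Galton–Watson limit theorem (the martingale $\tilde Z_n/a^n$ is $L^1$-bounded since $\sigma^2(0)<\infty$). Convergence in probability follows from the $L^1$ convergence, and the stated consequence $X_{n_K}=Z_{n_K}/K\sim W(z_0)K^{c-1}$ is immediate upon multiplying by $a^{n_K}/K=K^{c-1}$.

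The main obstacle, I expect, is not the branching limit theorem (which is classical) but justifying the coupling bound rigorously, and in particular controlling the density $X_{n-1}$ over the whole window rather than just in expectation. The estimate in \eqref{coupling-bound} implicitly assumed $Z_{n-1}$ behaves like $a^{n-1}W$, but that is precisely what we are trying to prove; to avoid circularity one should either run a Gronwall-type induction showing the defect stays controlled generation by generation, or dominate $Z_n$ above by a supercritical branching process with mean $a=m^K(0)\le a$ (legitimate by the monotonicity in A0 and \ref{A5}) so that $\E[Z_{n-1}]\le a^{n-1}z_0$ holds unconditionally. The delicate point is that the mean defect bound in \ref{A5} is only $O(1/\sqrt K)$ at $x=0$, so one must check that the accumulated first-order ($Cx$) drift, integrated against the growing population, does not destroy the Malthusian rate before generation $n_K$; this is where the constraint $c<1$ and the quadratic-in-$a^{n_K}$ error are genuinely used.
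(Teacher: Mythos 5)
Your overall strategy --- couple $Z$ to the Galton--Watson process $\tilde Z$, show the defect $\tilde Z_{n_K}-Z_{n_K}$ is $o(a^{n_K})$ in $L^1$, and invoke the classical $L^1$ martingale limit for $\tilde Z_n/a^n$ --- is the same as the paper's, but the two points you yourself flag as ``obstacles'' are genuine gaps rather than loose ends. First, your central displayed bound estimates the expected \emph{number of individual-level coupling disagreements} and then treats that as a bound on $\E\bigl|\tilde Z_{n_K}-Z_{n_K}\bigr|$. These are not the same quantity: an individual present in $\tilde Z$ but missing from $Z$ at generation $n$ leaves descendants, so a discrepancy created at generation $n$ is amplified by a factor of order $a^{n_K-n}$ by time $n_K$. (The amplified sum $\sum_{n\le n_K}a^{n_K-n}\,a^{2(n-1)}/K$ happens to still be $O(a^{2n_K}/K)=O(K^{2c-1})$ because the last generations dominate, but that must be established by the generation-by-generation induction you only gesture at; as written the step is not a proof.) The paper sidesteps this by using a monotone quantile coupling under which $Z^{\gamma}_n\le Z_n\le\tilde Z_n$ pointwise (the auxiliary process $Z^\gamma$ having the frozen offspring law at density $K^{\gamma-1}$), so the defect is sandwiched and its expectation reduces to a difference of deterministic Galton--Watson means, $z_0\bigl(a^{n_K}-m^K(K^{\gamma-1})^{n_K}\bigr)=o(K^{c})$.

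Second, the per-individual bound $g(x)=O(x)$ is supplied by \ref{A1} and \ref{A5} only as $x\to0$, so you must control the contribution from the event that the density leaves the region where that expansion is valid; pointwise domination by $\tilde Z$ controls means but not that event. The paper handles it by introducing $\tau=\inf\{n:X_n>K^{\gamma-1}\}$ with $c<\gamma<1$, noting $\tau\ge\nu:=\inf\{n:\tilde Z_n>K^{\gamma}\}$, and bounding $\E[\tilde Z_{n_K};\nu\le n_K]$ by Cauchy--Schwarz together with Doob's maximal inequality for the martingale $\tilde Z_na^{-n}$, which gives $\P(\nu\le n_K)\le K^{c-\gamma}$ and hence a contribution $O(K^{c+(c-\gamma)/2})=o(K^{c})$. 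Some such truncation is indispensable before the $O(x)$ expansion may be applied along the whole trajectory. Finally, the $O(1/\sqrt K)$ defect $a-m^K(0)$ from \ref{A5} must be carried through the induction --- it adds an $O(\log K/\sqrt K)$ term to the rescaled defect --- and it is absent from your displayed estimate.
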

\begin{proof}
Construct the replication processes $Z$ and $\tilde Z$, as well as a
third process $Z^\gamma=\{Z^\gamma_n\}$, on the same probability space
by the following coupling. Let $U_{n,j}$, $n,j\in \mathbb{N}$
be independent uniformly distributed random variables on
$[0,1]$. For each $K$ and $x$ define $t^K_{-1}(x)=t_{-1}=0$ and $0\leq t^K_0(x)\leq t^K_1(x)\leq t^K_2(x)\leq\ldots$
so that $\P(t^K_{k-1}(x)< U_{n,j}\leq
t^K_{k}(x))=\P(\xi_{n,j}=k|X_{n-1}=x), k\in \mathbb{N}$. Further, let $0\leq t_0\leq t_1\leq
t_2\leq\ldots$ so that $\P(t_{k-1}< U_{n,j}\leq
t_{k})=\P(\eta_{n,j}=k), k\in \mathbb{N}$. We can then define the
reproduction random variables $\xi_{n,j}$ and population sizes  $Z_n$, $\tilde Z_n$, as well as densities $X_n$
inductively on the same probability space by, 
$$\xi_{n,j} = k \Leftrightarrow t^K_{k-1}(X_{n-1})< U_{n,j}\leq
t^K_{k}(X_{n-1})\mbox{ and }\eta_{n,j} = k \Leftrightarrow t_{k-1}< U_{n,j}\leq t_{k}.$$ and, as before,  \eqref{Zn} and \eqref{tilden}, using $\xi_{n,j}$ and $\eta_{n,j}$ respectively.
 Similarly, we write 
$$\xi^\gamma_{n,j}:=\sum_{k=0}^\infty k1_{(t^K_{k-1}(K^{\gamma-1}),t^K_{k}(K^{\gamma-1})]}(U_{n,j})$$ and $Z^\gamma_n$ correspondingly.

By the distributional properties of $\xi_{n,j}|X_n=x$ and $\eta_{n,j}$ (Assumption A0),
$$
t^K_k(x) = \P(\xi_{n,j}\leq k|X_n=x) \geq \P(\eta_{n,j}\leq k)=t_k
$$
and $$t^K_k(K^{\gamma -1})\geq t^K_k(X_n),$$
the latter as long as $n<\tau:=\inf\{n;X_n>K^{\gamma-1}\}$. 
Hence, by induction
for the random entities realised with the help of the $U_{n,j}, \; \tilde Z_n\geq Z_n, n\in\mathbb N,$ pointwise, and $Z^\gamma_n\leq Z_n, n<\tau$. Further, $\tilde Z_n\geq Z^\gamma_n$ for all $n$. It follows that 
$$0\leq\tilde Z_n-Z_n \leq \tilde Z_n - Z^{\gamma}_n 1_{\{n<\tau\}} -
Z_n1_{\{n\geq\tau\}}  \leq \tilde Z_n - Z^{\gamma}_n 1_{\{n<\tau\}}\leq \tilde
Z_n - Z^{\gamma}_n  + Z^{\gamma}_n1_{\{n\geq\tau\}}.$$

Now, in order to show  
that
$$
\lim_{K\to\infty}(\tilde Z_{n_K}-Z_{n_K})K^{-c} =0,
$$
we choose  $1/2 < c<\gamma <1$.
By this and \ref{A5}, 
\begin{align*}
a\geq m^K(K^{\gamma-1})=\, & a - \big(a-m^K(0)\big) - \big(m^K(0)- m^K(K^{\gamma-1})\big)\geq \\
& a - aAK^{-1/2} -aBK^{\gamma -1}+ o(K^{\gamma -1})
= a\big(1 - BK^{\gamma -1}+ o(K^{\gamma -1})\big)
\end{align*}
for suitable constants $A$ and $B$. 
Hence,
\begin{align*}
\E\big(\tilde Z_{n_K} - Z^\gamma_{n_K}\big) =\, & 
z_0 \big(a^{n_K} - m^K(K^{\gamma  -1})^{n_K}\big) = \\
&
z_0a^{c\log K}\big(1 - (1-BK^{\gamma-1} +o(K^{\gamma-1}))^{c\log K}\big) = o(K^{c}). 
\end{align*}
Thus,
$$\E[\tilde X_{n_K} - X^\gamma_{n_K}]=o(K^{c-1}).$$

For the remaining term, note that
$$\E[Z^\gamma_{n_K};\tau\leq n_K]\le \E [\tilde Z_{n_K};\tau\le n_K]
\le \Big(\E[ {\tilde Z}^2_{n_K}]\P(\tau \le n_K)\Big)^{1/2},$$
by the Cauchy-Schwartz inequality.
Since $Z_n\le \tilde Z_n$ for all $n$, it takes longer for the former
process to reach $K^\gamma$  than for the latter, so that
$$\tau\ge \nu :=\inf\{n:\tilde Z_n>K^\gamma\}$$
and
\begin{align*}
\P (\tau\le n_K)\le\, & \P \big(\nu \le n_K\big)
=  \P \big(\sup_{n \le n_K}\tilde Z_n>K^\gamma\big)= \\
&
\P \big(a^{-n_K}\sup_{n\le n_K}\tilde Z_n>K^\gamma a^{-n_K}\big)\le \\
& \P \big(\sup_{n \le n_K}\tilde Z_na^{-n}>K^{\gamma -c}\big)\le K^{c-\gamma},
\end{align*}
where the last bound is Doob's inequality for the martingale $\{\tilde Z_na^{-n}\}$. Since
$\E[\tilde Z^2_{n_K}] =  O(K^{2c})$, by the formula for expectation and
variance of Galton-Watson processes, 
$$
\lim_{K\to\infty} K^{-c}\E [\tilde Z_{n_K}; \nu\le  n_K]=0.
$$
Recalling  that $\gamma > c$, we conclude that 
\begin{equation}
\lim_{K\to\infty}(\tilde Z_{n_K}-Z_{n_K})K^{-c} =0.
\end{equation}
holds in $L^1$, and hence in probability.
For the corresponding densities, division by $K$ yields
$$
\lim_{K\to\infty}(\tilde X_{n_K}-X_{n_K})K^{1-c} =0.
$$

 \end{proof}

\section{The branching like stage forms a random initial condition for later development.}

If the process does not die out, it will thus grow
exponentially in n, at least as long as it does not approach $K$ and
for fixed $K$ this holds for some $n_K=c\log K$ generations. 
Then,  law of large numbers type effects should stabilise the subsequent
growth.We proceed to this, giving first a result on densities for
fixed time, $K\to\infty$, and $K$-dependent but stabilising starting density:

\begin{thm}\label{Fima} Under the assumptions stated,
for any time $n$,
$$\lim_{K\to\infty}X_n= x_n,$$ in probability, where $x_n$ is the
$n$:th iterate of $f$, $x_{n}=f_n(x_0)$. If $X_0\to x_0$ holds in $L^1$,
the conclusion can actually be strengthened to mean square
convergence.
\end{thm}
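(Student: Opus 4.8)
The plan is to prove convergence by induction on the time $n$, treating $x_n = f_n(x_0)$ as the deterministic target and controlling the fluctuation of $X_n$ around $f(X_{n-1})$. The base case $n=0$ is Assumption A3, which gives $X_0 \to x_0$ in probability (or in $L^1$ under the strengthened hypothesis). For the inductive step, I would decompose
\begin{equation}\label{decomp}
X_n - x_n = \big(X_n - f^K(X_{n-1})\big) + \big(f^K(X_{n-1}) - f(X_{n-1})\big) + \big(f(X_{n-1}) - f(x_{n-1})\big).
\end{equation}
The first bracket is the genuinely stochastic term, a centered conditional increment; the second is a deterministic approximation error between $f^K$ and $f$; the third transports the inductive hypothesis forward through the map $f$.

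For the first term, note that $X_n = K^{-1}\sum_{j=1}^{Z_{n-1}}\xi_{n,j}$ and, by Assumption A0, $\E[X_n \mid \F_{n-1}] = X_{n-1} m^K(X_{n-1}) = f^K(X_{n-1})$. Hence $X_n - f^K(X_{n-1})$ is a conditionally centered sum, and I would compute its conditional variance: given $\F_{n-1}$, the $\xi_{n,j}$ are i.i.d. with variance $\sigma_K^2(X_{n-1})$, so
\begin{equation}
\Var\big(X_n \mid \F_{n-1}\big) = K^{-2} Z_{n-1}\,\sigma_K^2(X_{n-1}) = K^{-1} X_{n-1}\,\sigma_K^2(X_{n-1}).
\end{equation}
Since $\sigma_K^2$ is uniformly bounded by \ref{A4} and $X_{n-1}$ is tight (being convergent in probability at step $n-1$), this conditional variance is $O_p(K^{-1}) \to 0$. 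By the conditional Chebyshev inequality and dominated convergence, the first bracket in \eqref{decomp} tends to $0$ in probability. For the second bracket, \ref{A5} gives $\sup_x |f^K(x) - f(x)| = O(1/\sqrt K) \to 0$ uniformly, so this term vanishes deterministically. The third bracket is handled by the continuous mapping theorem: by \ref{A1} and \ref{A2}, $f$ is continuous, so $X_{n-1} \to x_{n-1}$ in probability forces $f(X_{n-1}) \to f(x_{n-1}) = x_n$ in probability. Combining the three, $X_n \to x_n$ in probability, closing the induction.

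For the mean-square strengthening, I would run the same induction in $L^2$. The first bracket is controlled directly by taking expectations of the conditional variance, giving $\E[(X_n - f^K(X_{n-1}))^2] \le K^{-1}\E[X_{n-1}]\,\sup_x\sigma_K^2(x) \to 0$; the second is $O(1/K)$ by \ref{A5}. The only delicate point is the third term: to propagate $L^2$-convergence through $f$ I need a Lipschitz-type bound. Here \ref{A1} supplies a derivative $m'$ that is uniformly continuous near the origin, and together with boundedness of $m$ this yields a Lipschitz constant for $f$ on the relevant range, so that $\E[(f(X_{n-1}) - f(x_{n-1}))^2] \le L^2\,\E[(X_{n-1}-x_{n-1})^2] \to 0$.

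The main obstacle is ensuring the uniform integrability and tightness needed to pass from conditional variance bounds to unconditional convergence, and in the $L^2$ case, securing a genuine Lipschitz (not merely continuous) control of $f$ over a range that contains $X_{n-1}$ with high probability. Because $X_{n-1}$ converges to the \emph{finite} constant $x_{n-1}$, one can restrict attention to a bounded neighborhood and invoke the local Lipschitz property of $f$ there; the contribution from the low-probability event where $X_{n-1}$ is large must be shown negligible, which is where boundedness of $m$ (hence linear growth of $f$) and the uniform variance bound \ref{A4} do the work. I expect the bookkeeping of these tail contributions, rather than any conceptual difficulty, to be the most technical part of the argument.
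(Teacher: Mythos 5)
Your argument is correct and is essentially the intended one: the paper does not prove this theorem itself but defers to Theorem 1 of \cite{Klethr}, whose proof is exactly your induction on $n$ through the recursion $X_n = f^K(X_{n-1}) + K^{-1/2}\eps_n$ with the conditional variance bound $\Var(X_n\mid\F_{n-1}) = K^{-1}X_{n-1}\sigma_K^2(X_{n-1})$, and the same three-term decomposition reappears in the paper's own later lemmas on $\Delta_n$ and on $f^K_n - f_n$. One simplification for your $L^2$ step: since $m$ is non-increasing and differentiable, $f'(x)=m(x)+xm'(x)\le m(x)\le a$, and $f'\ge 0$ by \ref{A2}, so $f$ is globally Lipschitz with constant $a$ and the tail bookkeeping you anticipate is unnecessary.
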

For a proof (under somewhat weaker conditions), see Theorem 1 of \cite{Klethr}.

Now, in our framework the second, ``post-branching'', stage starts
at time $n_K$ from $W(z_0)a^{n_K} =W(z_0)K^c$ individuals. Hence, the
starting density, $W(z_0)K^{c-1}\to x_0=0$. But this is a fixed point
of $f$, and so Theorem \ref{Fima} just yields convergence to zero. The
remedy is to consider ever later time points. 

\begin{lem} If $f$ increases strictly but $m$ decreases and $a=m(0)>1$ 
(Assumptions A0, \ref{A1}, and \ref{A2}),then
$$
h(x)=\lim_{n\to\infty}f_n(x/a^n).
$$
is well defined, continuous, and strictly increasing.The convergence
is uniform and $h(0)=0$.
\end{lem}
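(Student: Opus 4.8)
The plan is to analyze $g_n(x):=f_n(x/a^n)$ and extract, in this order, existence, continuity, and strict monotonicity; the whole argument rests on two facts: that $m\le a$ prevents $f$ from outgrowing the scaling by $a^n$, and that $f$ is asymptotically linear with slope $a$ at the origin. For existence, since $m$ is non-increasing with $m(0)=a$ we have $f(y)=ym(y)\le ay$ for all $y\ge 0$; applying this with $y=x/a^{n+1}$ gives $f(x/a^{n+1})\le x/a^n$, and monotonicity of $f_n$ yields $g_{n+1}(x)=f_n\big(f(x/a^{n+1})\big)\le f_n(x/a^n)=g_n(x)$. Thus $g_n(x)$ is non-increasing in $n$ and, being bounded below by $0$ (as $f\ge 0$), converges to some $h(x)$; since $f(0)=0$ fixes the origin, $h(0)=0$.

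For uniform convergence the key input is a global Lipschitz bound. Differentiating, $f'(x)=m(x)+xm'(x)\le m(x)\le a$ because $m'\le 0$, while $f'\ge 0$ since $f$ is strictly increasing; hence $f$ is $a$-Lipschitz and $f_k$ is $a^k$-Lipschitz. Writing the telescoping difference $g_n(x)-h(x)=\sum_{k\ge n}\big(g_k(x)-g_{k+1}(x)\big)$ and using $|x/a^k-f(x/a^{k+1})|=(x/a^{k+1})\big(a-m(x/a^{k+1})\big)$ together with $a-m(y)\le C'y$ near $0$ (which follows from \ref{A1}, as $m'$ is bounded near the origin), I obtain $g_k(x)-g_{k+1}(x)\le a^k\cdot C'x^2/a^{2(k+1)}=C'x^2a^{-k-2}$. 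Summing the geometric tail gives $0\le g_n(x)-h(x)\le C'x^2a^{-n-1}/(a-1)$, which tends to $0$ uniformly on every compact $[0,M]$. As a uniform limit of the continuous functions $g_n$, the function $h$ is continuous.

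Next I would record two structural facts. The identity $g_n(ax)=f_n(x/a^{n-1})=f\big(f_{n-1}(x/a^{n-1})\big)=f(g_{n-1}(x))$ passes to the limit, using continuity of $f$, to yield the Schröder-type relation $h(ax)=f(h(x))$. Second, writing $g_n(x)/x=\prod_{k=0}^{n-1} m(y_k)/a$ with $y_k=f_k(x/a^n)\le xa^{k-n}$, one has $\sum_k\big(a-m(y_k)\big)\le C'x\sum_k a^{k-n}\le C'x/(a-1)$, so the product tends to $1$ as $x\to0$ uniformly in $n$; hence $h(x)/x\to 1$ as $x\to 0$.

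Finally, $h$ is non-decreasing as a limit of the increasing $g_n$, and the remaining, most delicate, step is to exclude flat pieces. Suppose $h\equiv c_0$ on a maximal interval $[p,q]$ with $p<q$. If $p=0$ then $h\equiv 0$ on $(0,q]$, contradicting $h(x)\sim x$. If $p>0$, then from $h(x)=f(h(x/a))$ and injectivity of $f$, $h$ is constant on $[p/a,q/a]$, and inductively on $[p/a^k,q/a^k]$; on that shrinking interval $h(x)/x$ ranges over values whose ratio is exactly $q/p>1$, incompatible with $h(x)/x\to1$ as $x\to0$. This contradiction forces $h$ to be strictly increasing. The principal obstacle is precisely this step: a pointwise monotone limit of strictly increasing functions need not be strictly increasing, and it is the combination of the functional equation $h(ax)=f(h(x))$ with the exact slope-$1$ asymptotics at the origin that rules out the flat intervals.
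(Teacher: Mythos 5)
Your proof is correct, and in two places it takes a genuinely different route from the paper's. For uniform convergence and continuity, the paper simply invokes Dini's theorem on the decreasing sequence $h_n(x)=f_n(x/a^n)$; you instead telescope $g_n-h=\sum_{k\ge n}(g_k-g_{k+1})$ and bound each increment by $C'x^2a^{-k-2}$ using the $a^k$-Lipschitz property of $f_k$ and $a-m(y)\le C'y$ near the origin. This is more work but arguably cleaner: Dini's theorem presupposes continuity of the limit, which the paper has not yet established at that point, whereas your quantitative bound delivers uniform convergence and continuity simultaneously (with the small caveat, which you should make explicit, that $a-m(y)\le C'y$ is only available for $y$ below some $\epsilon$, so the telescoping must start at an index $n_0(M)$ depending on the compact $[0,M]$ — harmless, since finitely many initial terms do not affect uniform convergence). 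For strict monotonicity, both arguments use the Schr\"oder equation $h(ax)=f(h(x))$ to propagate non-degeneracy away from the origin, but they differ at the origin itself: the paper bounds the derivative, $h_n'(x)=a^{-n}\prod_j f'(f_j(x/a^n))\ge e^{-a}$ for small $x$, so that $h$ inherits a positive lower Lipschitz bound there; you instead control the values, showing $h(x)/x\to1$ as $x\to0$ via the product representation $g_n(x)/x=\prod_k m(y_k)/a$, and then derive a contradiction from a flat piece $[p,q]$ because the constancy of $h$ on $[p/a^k,q/a^k]$ forces the two endpoint ratios $h(\cdot)/\cdot$ to differ by the fixed factor $q/p>1$ while both must tend to $1$. (Your phrasing of that last step should note that for large $k$ the common value on $[p/a^k,q/a^k]$ is strictly positive, precisely because $h(x)/x\to1$, so the ratio comparison is legitimate; alternatively, a zero value already contradicts $h(x)/x\to1$ directly.) Both approaches are sound; the paper's derivative bound is shorter, while your slope-$1$ asymptotics at the origin is a slightly stronger statement that could be of independent use.
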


\begin{proof}
Since $f$ is increasing,  so are all $f_n$. By definition,
 $$f(x/a)=m(x/a)x/a\leq m(0)x/a=x,$$ 
for $x \geq 0$. 
Hence,
 $$f_{n+1}(x/a^{n+1})=f_n(f(x/a^{n+1}))\leq f_n( x/a^{n}).$$
The sequence $h_n(x) := f_n(x/a^n)$ thus decreases in $n$ for any positive
$x$, and its limit  $h$, as $n\to\infty$, must exist and be a non-decreasing function, like the $f_n$. By Dini's theorem, the
convergence is uniform on any compact interval. Clearly, $h(0)
=h_n(0)=f_n(0)=0$ for all $n$.



It remains to prove that the limit $h$ increases strictly. However, there exist $C>0$ and $ \epsilon >0$ such that
$$f'(x)=m(x)+xm'(x)=m(0)+m(x)-m(0)+xm'(x)\geq a-2x\sup_{0\leq u\leq x}|m'(u)|>a-Cx>0,$$
for $0<x<\epsilon$. 
For any $x<\min(\epsilon,1/C)$,
\begin{align*}
h_n'(x)  =& a^{-n} f'_n(x/a^n) = a^{-n}\prod_{j=0}^{n-1} f'(f_j(x/a^n)) \ge 
a^{-n}\prod_{j=0}^{n-1} \big(a- Cf_j(x/a^n)\big) \ge\\
& 
a^{-n}\prod_{j=0}^{n-1} \big(a- Cx a^{j-n}\big)
\ge \prod_{j=0}^{n-1} \big(1-  a^{-j}\big)\ge e^{-a}, \quad \forall \, n\ge 0.
\end{align*}
Taking the limit $n\to\infty$, we see that $h$ increases strictly  in an open neighbourhood of the origin. However, as  $f_{n+1}(x/a^{n+1})=f(f_n((x/a)/a^{n}))$,  letting  $n\to\infty$, shows that $h$ solves Schr\"{o}der's functional equation
$$
h(x)=f(h(x/a)). 
$$
Therefore, if it were constant on an interval $[x_1,x_2]$ with $x_2>x_1$, then also $h(x_1/a^k) = h(x_2/a^k)$, for any integer $k\ge 1$, contradicting the fact that $h$ increases strictly on some neighbourhood of the origin. Thus,
$h$ must be strictly increasing on the positive half line.

\end{proof}

An immediate consequence of this will be of explicit use in the proof
of our main Theorem \ref{main} later:
\begin{cor} 
\begin{equation}
\lim_{n\to\infty}f_n(x/a^n +o(a^{-n})) = h(x).
\end{equation}
\end{cor}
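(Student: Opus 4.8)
My plan is to reduce the corollary to two facts already supplied by the preceding Lemma: the uniform convergence $h_n\to h$ on compact intervals and the continuity of $h$. Writing the perturbation as $\delta_n=o(a^{-n})$, so that $a^n\delta_n\to0$ by definition, I would absorb it into the argument by setting $y_n:=x+a^n\delta_n$; then
$$
\frac{x}{a^n}+\delta_n=\frac{x+a^n\delta_n}{a^n}=\frac{y_n}{a^n},
$$
and hence $f_n\big(x/a^n+\delta_n\big)=h_n(y_n)$ in the notation $h_n(y):=f_n(y/a^n)$ of the Lemma. Since $a^n\delta_n\to0$, the points $y_n$ converge to $x$, and, being (rescaled) nonnegative densities, they lie in a fixed compact interval $[0,M]\subset\Real_+$ for all large $n$.

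The heart of the argument is then the elementary fact that a uniformly convergent sequence of functions, evaluated along a convergent sequence of points, converges to the continuous limit evaluated at the limit point. Concretely, I would split
$$
\big|h_n(y_n)-h(x)\big|\le\big|h_n(y_n)-h(y_n)\big|+\big|h(y_n)-h(x)\big|.
$$
The Lemma gives $h_n\to h$ uniformly on $[0,M]$ (via Dini's theorem), so the first term is bounded by $\sup_{y\in[0,M]}|h_n(y)-h(y)|\to0$; the second term tends to $0$ because $h$ is continuous and $y_n\to x$. Letting $n\to\infty$ yields $h_n(y_n)\to h(x)$, which is exactly the assertion.

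I do not expect a genuine obstacle here: the statement is essentially a continuity-of-the-limit argument and inherits all its force from the Lemma. The only point demanding a little care is the domain bookkeeping, namely ensuring that the perturbed arguments $y_n/a^n$ stay within a single compact set on which the uniform convergence is available; this is guaranteed by $y_n\to x$. For $x>0$ the term $x/a^n$ eventually dominates the perturbation, while the case $x=0$ is immediate since $h(0)=0$ and $h_n(y_n)\to h(0)$ by the same estimate. No regularity of $f$ or $m$ beyond what the Lemma already invokes is required.
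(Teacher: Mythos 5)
Your argument is correct and is exactly the one the paper intends: the Corollary is stated as an immediate consequence of the Lemma, relying precisely on the uniform convergence of $h_n(y)=f_n(y/a^n)$ to $h$ on compacts (via Dini) together with the continuity of $h$, applied along the perturbed points $y_n=x+a^n\delta_n\to x$. Your bookkeeping of the $x=0$ case and of keeping $y_n$ in a fixed compact set is the only detail that needs attention, and you handle it adequately.
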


Now, for fixed $K$, the density process $X$ satisfies the fundamental
recursive equation (cf. \cite{Klethr})
\begin{equation}
\label{barZ}
X_n = f^K(X_{n-1})  + \frac 1{\sqrt K}\eps_{n},
\end{equation}
where
$$
\eps_n =  \frac 1{\sqrt K}\sum_{j=1}^{KX_{n-1}} (\xi_{n,j}-\E[\xi_{n,j}|\F_{n-1}])
$$
a martingale difference sequence,
 $\E[\eps_n|\F_{n-1}]=0$, with
$$
\E [\eps^2_n|\F_{n-1}] = \mathrm{Var}[\eps_n|\F_{n-1}]= \sigma_K^2(X_{n-1})  .
$$

The corresponding deterministic recursion, obtained by omitting the
martingale difference term, is
$$
x^K_n=f^K(x^K_{n-1})=f^K_n(x_0).
$$

From now on, what is needed of Assumptions A0-\ref{A5} is used freely. We take $1/2<c<1$, write $\nu_K=\log K - n_K= (1-c)\log K$, 
and interpret $X$ suffices as their integral parts.
\begin{lem}
$$
X_{\log K}-f^K_{\nu_K}(X_{n_K})\xrightarrow[K\to\infty]{L^1} 0.
$$
\end{lem}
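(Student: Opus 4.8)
The plan is to run the deterministic map $f^K$ forward from the random time-$n_K$ density and control its drift away from the true trajectory over the $\nu_K=(1-c)\log K$ remaining generations. Write $Y_n:=f^K_{n-n_K}(X_{n_K})$ for $n\ge n_K$, so that $Y_{n_K}=X_{n_K}$, $Y_n=f^K(Y_{n-1})$, and $Y_{\log K}=f^K_{\nu_K}(X_{n_K})$. Put $\Delta_n:=X_n-Y_n$, whence $\Delta_{n_K}=0$. Subtracting $Y_n=f^K(Y_{n-1})$ from the fundamental recursion \eqref{barZ} gives
$$\Delta_n=\big(f^K(X_{n-1})-f^K(Y_{n-1})\big)+\tfrac1{\sqrt K}\eps_n,\qquad n_K<n\le\log K,$$
in which the first term is $\F_{n-1}$-measurable and the second a martingale difference.

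The crucial per-step estimate is obtained by passing through the limit $f$ rather than working with $f^K$ directly. Since $m$ is non-increasing with $m(0)=a$ and $f$ is strictly increasing with $f'(x)=m(x)+xm'(x)$, Assumptions \ref{A1} and \ref{A2} give $0\le f'\le a$ on all of $\Real_+$, i.e. $f$ is globally $a$-Lipschitz. Combined with the uniform bound $\sup_{x\ge0}|f^K(x)-f(x)|=O(1/\sqrt K)$ from \ref{A5}, this yields
$$\big|f^K(x)-f^K(y)\big|\le\big|f(x)-f(y)\big|+2\sup_{t\ge0}|f^K(t)-f(t)|\le a\,|x-y|+O(1/\sqrt K),$$
valid for all $x,y\ge0$, so that no truncation of large densities is required. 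Taking $L^1$ norms and using $\E[\eps_n\mid\F_{n-1}]=0$ together with $\E[\eps_n^2\mid\F_{n-1}]=\sigma_K^2(X_{n-1})\le C$ from \ref{A4}, I would obtain the scalar recursion
$$b_n\le a\,b_{n-1}+O(1/\sqrt K),\qquad b_n:=\E|\Delta_n|,\ b_{n_K}=0,$$
where the $O(1/\sqrt K)$ absorbs both the model mismatch $2\sup|f^K-f|$ and the rescaled noise $\tfrac1{\sqrt K}\E|\eps_n|\le\sqrt C/\sqrt K$.

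Unrolling this linear recursion over the $\nu_K$ post-branching steps,
$$b_{\log K}\le O(1/\sqrt K)\sum_{m=0}^{\nu_K-1}a^{m}=O\!\big(a^{\nu_K}/\sqrt K\big)=O\!\big(K^{1-c}K^{-1/2}\big)=O\!\big(K^{1/2-c}\big),$$
which tends to $0$ as $K\to\infty$ precisely because $c>1/2$. Hence $\E|X_{\log K}-f^K_{\nu_K}(X_{n_K})|\to0$, the assertion. If one instead uses the sharper conditional variance $\E[\eps_n^2\mid\F_{n-1}]=X_{n-1}\sigma_K^2(X_{n-1})\le CX_{n-1}$, then, with $\E[X_n\mid\F_{n-1}]=f^K(X_{n-1})\le aX_{n-1}$ and the coupling bound $\E[X_{n_K}]=\E[Z_{n_K}]/K\le\E[\tilde Z_{n_K}]/K=z_0K^{c-1}$, the noise contribution improves to $O(K^{-c/2})$, leaving only the model-error term $O(K^{1/2-c})$ as binding.

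The \emph{main obstacle} is the exponential amplification: over $\nu_K=(1-c)\log K$ generations $f^K$ dilates increments by up to $a^{\nu_K}=K^{1-c}$, so an error injected early is magnified enormously and a crude Lipschitz iteration would explode. The estimate survives only because the comparison is launched from an exactly matched value ($\Delta_{n_K}=0$, leaving no initial discrepancy to amplify) and because both per-step error sources are of the calibrated size $O(1/\sqrt K)$---the deterministic model mismatch by \ref{A5} and the stochastic noise by \ref{A4}. The delicate point is exactly this balance, $K^{1-c}\cdot K^{-1/2}=K^{1/2-c}$, which is what collapses the accumulated error to zero and forces the standing requirement $c>1/2$.
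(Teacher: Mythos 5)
Your proposal is correct and follows essentially the same route as the paper: the same error process $\Delta_n=X_n-f^K_{n-n_K}(X_{n_K})$ with $\Delta_{n_K}=0$, the same linear recursion $\E|\Delta_n|\le a\,\E|\Delta_{n-1}|+O(1/\sqrt K)$ using \ref{A4} for the martingale term, and the same final balance $a^{\nu_K}/\sqrt K=K^{1/2-c}\to0$. The only (harmless) deviation is that you obtain the $a$-Lipschitz bound for $f^K$ by detouring through $f$ and the uniform estimate in \ref{A5}, whereas the paper gets it directly from $0\le (f^K)'(x)=m^K(x)+x(m^K)'(x)\le m^K(x)\le a$, since $m^K$ is non-increasing; your detour merely adds another $O(1/\sqrt K)$ per step, which the same geometric sum absorbs.
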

\begin{proof}
Write 
$$\Delta_n = X_{n}-f^K_{n-n_K}(X_{n_K}),$$
for $n>n_K$.Then,
$$\Delta_n = f^K(X_{n-1})+\frac 1{\sqrt K}\eps_{n}-f^K\circ f^K_{n-1-n_K}(X_{n_K}).$$ 
Since for any $x\geq 0$, $0\leq \frac d{dx}f^{K}(x)=m^K(x)+x\frac d{dx} m^{K}(x)\leq
m^K(x)\leq a$ by assumption,
\begin{align*}
|\Delta_n|\leq & 
|f^K(X_{n-1})-f^K\circ f^K_{n-1-n_K}(X_{n_K})|+|\frac 1{\sqrt K}\eps_{n}|\leq \\
& a|\Delta_{n-1}|+|\frac 1{\sqrt
  K}\eps_{n}|\leq\ldots\leq \sum_{j=0}^{n-n_K-1}a^j|\frac 1{\sqrt
  K}\eps_{n-j}|. 
\end{align*}  
Adding to this that, for any natural $k$,
$$\E[|\eps_k|\leq \sqrt{\E[\eps_k^2]}=
\sqrt{\E[\E[\eps_k^2|X_{k-1}]]}\leq\sup_x\sigma_K(x) < \infty,$$ 
by \ref{A4}, we can conclude that
$$\E[|\Delta_{\log K}|]\leq Ca^{\nu_K}\frac 1{\sqrt
  K}\sup_x\sigma_K(x)=CK^{1/2-c}\sup_x\sigma_K(x)\to 0, $$
as $K\to\infty$.
\end{proof}

\begin{lem}
$$f^K_{\nu_K}(X_{n_K})-f_{\nu_K}(X_{n_K})\to 0.$$
\end{lem}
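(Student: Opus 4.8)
The plan is to control the difference between iterating the approximate map $f^K$ and the limit map $f$, both started from the same (random) point $X_{n_K}$, over $\nu_K = (1-c)\log K$ steps. The natural approach is a telescoping decomposition. Writing $a_j = f^K_j(X_{n_K})$ and $b_j = f_j(X_{n_K})$, I would estimate $|a_{\nu_K} - b_{\nu_K}|$ by inserting intermediate terms of the form $f_{\nu_K - j} \circ f^K_j$, so that consecutive differences isolate a single substitution of $f^K$ for $f$. Each such difference splits into two pieces: a one-step discrepancy $|f^K(\cdot) - f(\cdot)|$, which by the second half of \ref{A5} is $O(1/\sqrt K)$ uniformly in the argument, and the propagation of the previous error through the remaining iterations of $f$.

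For the propagation I would use that $f$ is Lipschitz with constant $a$ on the relevant range, since $0 \le f'(x) = m(x) + x m'(x) \le m(x) \le a$, exactly as invoked in the preceding lemma for $f^K$. Each accumulated error from step $j$ is then amplified by at most $a^{\nu_K - j}$ over the remaining iterates. Summing the geometric contributions gives a bound of order
\begin{equation*}
|f^K_{\nu_K}(X_{n_K}) - f_{\nu_K}(X_{n_K})| \le \sum_{j=0}^{\nu_K - 1} a^{\nu_K - 1 - j}\,\sup_x |f^K(x) - f(x)| = O\!\left(a^{\nu_K} \cdot \tfrac{1}{\sqrt K}\right).
\end{equation*}
Since $a^{\nu_K} = K^{1-c}$, this is $O(K^{1/2 - c})$, which tends to $0$ precisely because $c > 1/2$. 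This is the same cancellation between the $\sqrt K$-scale fluctuations and the exponential growth over $\nu_K$ steps that drove the previous lemma, and it is gratifying that the constraint $c > 1/2$ reappears here for the same reason.

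The main obstacle is that the crude Lipschitz amplification by $a$ at every step is genuinely needed, yet it exactly balances against the $1/\sqrt K$ one-step error — so there is no slack, and one must be careful that the bound $\sup_x|f^K(x) - f(x)| = O(1/\sqrt K)$ from \ref{A5} is truly uniform over all $K$ and all arguments, including the large values the iterates may visit. A secondary point is that the argument gives convergence of the difference to $0$ as a deterministic bound on $|f^K_{\nu_K}(X_{n_K}) - f_{\nu_K}(X_{n_K})|$ that holds pathwise (it does not involve $\eps_n$), so the stated convergence follows in the strong sense without needing to invoke randomness of $X_{n_K}$ at all; the only subtlety is confirming that the telescoping insertions $f_{\nu_K - j}\circ f^K_j$ are well-defined, i.e. that the functions are defined on all of $\Real_+$, which is guaranteed since $m^K$ and $m$ are defined on all of $\Real_+$.
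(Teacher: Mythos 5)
Your proposal is correct and is essentially the paper's own argument: the paper performs the same one-step substitution (inserting $f\circ f^K_{n-1}$ and using $\sup_x|f^K(x)-f(x)|=O(1/\sqrt K)$ from \ref{A5} together with the Lipschitz bound $f'\le a$), then unrolls the resulting recursion into the identical geometric sum $O(a^{\nu_K}/\sqrt K)=O(K^{1/2-c})\to 0$. Your telescoping at the inner level versus the paper's induction on the outer composition is only a cosmetic difference, and your observation that the resulting bound is deterministic and uniform in the argument matches the paper's conclusion $\sup_{x\ge 0}|f^K_{\nu_K}(x)-f_{\nu_K}(x)|\to 0$.
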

\begin{proof}
By Assumption \ref{A5}, for any $0\le x\le 1 $ and some $C>0$, 
\begin{align*}
|f^K_n(x)-f_n(x)| \leq & |f^K\circ f^K_{n-1}(x) - f\circ f^K_{n-1}(x)|
+ |f\circ f^K_{n-1}(x)-f\circ f_{n-1}(x)| \leq \\
& C/\sqrt{K} +
a|f^K_{n-1}(x) - f_{n-1}(x)|.
\end{align*}

Hence, by induction, for any $x$ and $n$
$$|f^K_n(x)-f_n(x)| \leq \frac{C}{\sqrt K}\sum_{j=0}^{n-1}a^j \leq
\frac{C}{(a-1)\sqrt K}a^n,$$ 
and 
$$\sup_{0\le x}|f^K_{\nu_K}(x)-f_{\nu_K}(x)| =
 O( a^{\nu_K}/\sqrt K) = O(K^{1/2-c})\to 0,$$
as $K\to\infty$.
\end{proof}

After these lemmas and the corollary, the proof of the main theorm below is direct.
\begin{thm}\label{main}
Assume $Z_0=z_0$ given and all of Assumptions A0-\ref{A5} valid.   Then
$X_{\log K}$ converges in distribution   
$$
X_{\log K} \xrightarrow[K\to\infty]{ D} h\circ W(z_0).
$$
\end{thm}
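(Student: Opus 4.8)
The plan is to splice the three preceding lemmas together so as to replace $X_{\log K}$ by a \emph{deterministic} iterate of $f$ applied to the branching-stage density, and then to read off the limit from the structure of $h$. Adding the two post-branching lemmas through the triangle inequality gives
$$
X_{\log K}-f_{\nu_K}(X_{n_K})=\big(X_{\log K}-f^K_{\nu_K}(X_{n_K})\big)+\big(f^K_{\nu_K}(X_{n_K})-f_{\nu_K}(X_{n_K})\big)\xrightarrow[K\to\infty]{L^1}0,
$$
the first bracket vanishing in $L^1$ by the first post-branching lemma and the second by the deterministic uniform bound $O(K^{1/2-c})$ of the second. Since $L^1$-convergence implies convergence in probability, by Slutsky's theorem it suffices to prove that $f_{\nu_K}(X_{n_K})$ converges in distribution to $h\circ W(z_0)$.

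Next I would pass to the scaling in which $h$ lives. Because logarithms are base $a$, one has $a^{n_K}=K^c$ and $a^{\nu_K}=K^{1-c}$, so setting
$$
Y_K:=X_{n_K}\,a^{\nu_K}=Z_{n_K}/a^{n_K}
$$
gives $f_{\nu_K}(X_{n_K})=f_{\nu_K}\big(Y_K/a^{\nu_K}\big)=h_{\nu_K}(Y_K)$ in the notation $h_n(x)=f_n(x/a^n)$ of the lemma defining $h$. The first theorem of Section~2 yields exactly $Y_K\to W(z_0)$ in probability (indeed in $L^1$) as $K\to\infty$.

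The crux is then to transport this convergence through the varying iterates $h_{\nu_K}$. I would invoke Skorokhod's representation theorem to realise the $Y_K$ on one probability space with $Y_K\to W(z_0)$ almost surely. Fixing such an $\omega$ and writing $w:=W(z_0)(\omega)<\infty$, one has $Y_K(\omega)/a^{\nu_K}=w/a^{\nu_K}+\big(Y_K(\omega)-w\big)a^{-\nu_K}$, where the perturbation is $o(a^{-\nu_K})$ because $Y_K(\omega)-w\to0$. The Corollary (with deterministic value $x=w$ and $n=\nu_K\to\infty$) then yields $h_{\nu_K}(Y_K(\omega))\to h(w)$; equivalently this pointwise limit follows from the uniform-on-compacts convergence $h_n\to h$ and the continuity of $h$ of the defining lemma, via $|h_{\nu_K}(Y_K)-h(w)|\le\sup_{[0,w+1]}|h_{\nu_K}-h|+|h(Y_K)-h(w)|$. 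As this holds for almost every $\omega$, we get $h_{\nu_K}(Y_K)\to h(W(z_0))$ almost surely on the representation space, hence $f_{\nu_K}(X_{n_K})\to h\circ W(z_0)$ in distribution. Combined with the first display and Slutsky's theorem, this is the assertion.

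The main obstacle is precisely this transport step, in which the iterate $f_{\nu_K}$, the number $\nu_K$ of iterations, and the argument $Y_K$ all vary with $K$ while $Y_K$ is random with unbounded support. The device that overcomes it is the Corollary's tolerance of an arbitrary $o(a^{-n})$ shift, which absorbs the fluctuation $Y_K-W(z_0)$ once the Skorokhod coupling has turned it into an ordinary deterministic null sequence for almost every fixed $\omega$. One need only verify that this shift stays $o(a^{-\nu_K})$, which is immediate from $a^{\nu_K}\big(Y_K/a^{\nu_K}-w/a^{\nu_K}\big)=Y_K-w\to0$; the unbounded support of $W(z_0)$ is harmless once $\omega$, hence $w<\infty$, is fixed, or alternatively one may avoid Skorokhod altogether by truncating to a compact $[0,M]$ on which $\sup|h_{\nu_K}-h|\to0$ while $\P(Y_K>M)$ is controlled by the tightness of $Y_K$.
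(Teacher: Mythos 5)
Your proof is correct and follows essentially the same route the paper intends: the paper declares the theorem ``direct'' from the two post-branching lemmas and the corollary on $f_n(x/a^n+o(a^{-n}))\to h(x)$, and you assemble exactly those ingredients, reducing $X_{\log K}$ to $f_{\nu_K}(X_{n_K})=h_{\nu_K}(Z_{n_K}/a^{n_K})$ and invoking Theorem~2.1. The only (harmless) difference is in the final transport step: the paper's drafted argument goes back through the coupled Galton--Watson process, using the almost-sure convergence of $\tilde Z_{n_K}/a^{n_K}$ to $W(z_0)$ together with the Lipschitz bound $|f_{\nu_K}(x)-f_{\nu_K}(y)|\le a^{\nu_K}|x-y|=K^{1-c}|x-y|$, whereas you work directly from the stated convergence in probability via Skorokhod representation (or, equivalently, a subsequence argument), which cleanly avoids re-entering the coupling construction.
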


\begin{rem}
The limits increase strictly with $n$. 
Recall that logarithms are with
base $a$.
\end{rem}

\begin{cor} 
For any fixed $n$
$$
X_{\log K+n} \xrightarrow[K\to\infty]{ D} f_n\circ h\circ W(z_0),
$$
where   $f_n$ still denotes the $n$-th iterate of $f$.
This  extends to weak convergence of the sequences, regarded as random elements in 
$\mathbb{R}^{\mathbb{Z}}$:
$$
\{X_{\log K+n}\}_{-\infty}^{\infty} \xrightarrow[K\to\infty]{ D}
  \{f_n\circ h\circ W((z_0)\}_{-\infty}^{\infty}.
$$
\end{cor}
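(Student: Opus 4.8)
The plan is to strengthen the first displayed convergence to convergence \emph{in probability} on the coupled probability space constructed in the proof of the first (branching--approximation) theorem, after which the finite--dimensional, and hence the sequential, statements follow almost for free. On that space the variables $U_{n,j}$ live and, by the first theorem, $W_K:=Z_{n_K}/a^{n_K}=X_{n_K}a^{\nu_K}$ converges to $W(z_0)$ in probability and in $L^1$; moreover the proof of the main theorem in fact delivers $X_{\log K}\xrightarrow[K\to\infty]{P}h\circ W(z_0)$ there, not merely in distribution, since its two lemmas are $L^1$ statements and $W_K\to W(z_0)$ is an in--probability statement. Fix an integer $n$. Because $\nu_K+n=(1-c)\log K+n\to\infty$, the time $\log K+n$ exceeds $n_K$ for all large $K$ and $f_{\nu_K+n}$ is a genuine iterate. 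Repeating the two lemmas preceding the main theorem with $\nu_K$ replaced by $\nu_K+n$, the martingale--difference bound gives $\E|X_{\log K+n}-f^K_{\nu_K+n}(X_{n_K})|=O(a^{\nu_K+n}K^{-1/2})=O(a^nK^{1/2-c})\to0$, and the iterate comparison gives $\sup_{x\ge0}|f^K_{\nu_K+n}(x)-f_{\nu_K+n}(x)|=O(a^nK^{1/2-c})\to0$. Only $c>1/2$ and the fixedness of $n$ are used, so the estimates are insensitive to the sign of $n$, and therefore $X_{\log K+n}-f_{\nu_K+n}(X_{n_K})\to0$ in $L^1$, hence in probability.

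To identify the limit, write $M=\nu_K+n$ and $X_{n_K}=W_Ka^{-\nu_K}$, so that $f_{\nu_K+n}(X_{n_K})=f_M\big(a^nW_K/a^M\big)=h_M(a^nW_K)$, where $h_M(y)=f_M(y/a^M)$ is the sequence of the lemma on Schröder's equation. That lemma gives $h_M\to h$ uniformly on compacts (Dini) with $h$ continuous; feeding the random argument $a^nW_K$, which converges in probability, into this locally uniform limit---by truncating to a compact $W$--interval and using the uniform continuity of $h$ there---yields $h_M(a^nW_K)\xrightarrow[K\to\infty]{P}h(a^nW(z_0))$. Combined with the previous paragraph this gives $X_{\log K+n}\xrightarrow[K\to\infty]{P}h(a^nW(z_0))$. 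Finally, Schröder's equation $h(x)=f(h(x/a))$ gives $h(ax)=f(h(x))$, whence $h(a^nx)=f_n(h(x))$ for $n\ge0$; as $f$ is, by Assumption \ref{A2}, a strictly increasing continuous bijection onto its range, the same identity with $f^{-1}$ extends it to $n<0$. Thus $h(a^nW(z_0))=f_n\circ h\circ W(z_0)$ for every integer $n$, which is the first display in the stronger, in--probability, form.

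For the sequential statement, fix a finite block of integers $n_1<\dots<n_r$. Coordinatewise convergence in probability, just established, is equivalent for a finite vector to joint convergence in probability, so $(X_{\log K+n_1},\dots,X_{\log K+n_r})\to(f_{n_1}\circ h\circ W(z_0),\dots,f_{n_r}\circ h\circ W(z_0))$ in probability, and a fortiori in distribution. Hence all finite--dimensional distributions of $\{X_{\log K+n}\}_{n\in\mathbb{Z}}$ converge to those of $\{f_n\circ h\circ W(z_0)\}_{n\in\mathbb{Z}}$. On the countable product $\mathbb{R}^{\mathbb{Z}}$ with the product topology---a Polish space---weak convergence is equivalent to convergence of all finite--dimensional distributions, tightness of the laws being automatic once each coordinate is tight; the asserted weak convergence of the sequences follows.

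The routine part is the first step, whose two estimates are verbatim rescalings of lemmas already proved. The step requiring real care is the limit identification: one must push the \emph{random} argument $W_K$ through the merely \emph{locally uniform} convergence $h_M\to h$, and then invoke Schröder's equation to rewrite the limit as an iterate of $f$, in particular handling the negative indices through the inverse of the strictly increasing map $f$. Once convergence is secured in probability on the coupled space, the passage to the product space $\mathbb{R}^{\mathbb{Z}}$ is soft and carries no further analytic content.
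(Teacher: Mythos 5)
Your proof is correct, but it takes a genuinely different route from the paper's. The paper proves the corollary in three lines by induction on $n$ from the recursion \eqref{barZ}: given $X_{\log K+n}\xrightarrow{D}f_n\circ h\circ W(z_0)$, one writes $X_{\log K+n+1}=f^K(X_{\log K+n})+\frac{1}{\sqrt K}\eps_{\log K+n+1}$, notes that the martingale term vanishes in $L^2$ and that $f^K\to f$ uniformly, and concludes by the continuous mapping theorem and Slutsky; functional convergence then follows from finite-dimensional convergence as in Billingsley. You instead re-run the two lemmas preceding the main theorem with horizon $\nu_K+n$ in place of $\nu_K$, identify the limit as $h(a^nW(z_0))$ via the Schr\"oder iterates $h_M$, and then use the functional equation $h(ax)=f(h(x))$ to rewrite $h(a^n\cdot)=f_n\circ h$. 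Your route is longer but buys two things the paper's argument does not make explicit: it upgrades the convergence to convergence in probability on the coupled space, and it treats negative $n$ on the same footing as positive $n$ --- the paper's forward induction starts at $n=0$ and does not obviously reach the indices $n<0$ that appear in the $\mathbb{R}^{\mathbb{Z}}$-valued statement, whereas your shifted-horizon argument (together with the observation that $f$ is invertible on its range, so $h(a^{-1}x)=f^{-1}(h(x))$) covers them cleanly. All the individual steps you use --- the $O(a^nK^{1/2-c})$ rescalings of the two lemmas, pushing the random argument $a^nW_K$ through the locally uniform limit $h_M\to h$, and the equivalence of weak convergence with finite-dimensional convergence on the Polish product space --- are sound.
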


\begin{proof}
This follows by induction on $n$ from the fundamental representation
\eqref{barZ}. For $n=0$ it is the statement of the main result. For
$n=1$ take limits as $K\to \infty$ in \eqref{barZ}, and note that the
stochastic term vanishes. Similarly, if it holds true for $n$, it
follows for $n+1$. Functional convergence follows from 
finite dimensional convergence, {\em cf.} \cite{B}, p. 19.
\end{proof}

\begin{cor}
For any sequence $\lambda_K=o(\log K)$, $$X_{\lambda_K}
\xrightarrow[K\to\infty]{ L^1} 0.$$
\end{cor}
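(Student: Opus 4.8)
The plan is to dominate the density from above by the coupled Galton--Watson process $\tilde Z$ and to exploit the fact that its mean grows only geometrically, at rate $a$. Dividing by $K$ and evaluating at a time $\lambda_K = o(\log K)$ then forces the expectation to vanish, and since the density is nonnegative this is exactly the desired $L^1$ statement.

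First I would invoke the pointwise domination $Z_n \le \tilde Z_n$, valid for every $n$, which was established through the coupling in the proof of the first theorem. Since $0 \le X_{\lambda_K} = Z_{\lambda_K}/K \le \tilde Z_{\lambda_K}/K$, it suffices to control the expectation of the right-hand side. As $\tilde Z$ is a classical Galton--Watson process with offspring mean $a$ and $\tilde Z_0 = z_0$, one has $\E[\tilde Z_{\lambda_K}] = z_0 a^{\lambda_K}$.

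Next I would rewrite $a^{\lambda_K}$ as a power of $K$. Recalling that all logarithms are to base $a$, so that $K = a^{\log K}$, we obtain
$$
\E[X_{\lambda_K}] \le \frac{z_0\, a^{\lambda_K}}{K} = z_0\, K^{\lambda_K/\log K - 1}.
$$
Because $\lambda_K = o(\log K)$, the exponent $\lambda_K/\log K - 1$ converges to $-1$, so this bound tends to $0$ as $K\to\infty$. Since $X_{\lambda_K} \ge 0$, convergence of its mean to $0$ is precisely $L^1$ convergence to $0$, which is the claim.

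There is no serious obstacle here: the corollary is an immediate consequence of the geometric growth of the branching envelope. The only minor points to attend to are that the domination must be used in its full-time form---it holds for all $n$, not merely up to the threshold $n_K$, so the argument applies even when $\lambda_K$ eventually exceeds $c\log K$---and that, as throughout, $\lambda_K$ is read as its integer part, which does not affect the asymptotics.
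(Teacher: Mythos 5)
Your proof is correct and takes essentially the same route as the paper: both reduce the claim to the bound $\E[X_{\lambda_K}]\le z_0\, a^{\lambda_K-\log K}\to 0$ together with nonnegativity of the density. The only cosmetic difference is that you justify the mean bound via the coupling $Z_n\le \tilde Z_n$, whereas the paper's one-line proof implicitly uses $m^K\le a$ (Assumption A5) to get $\E[Z_n]\le a^n z_0$ directly; both are valid and yield the identical estimate.
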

\begin{proof}
This is direct from
$$\E[X_{\lambda_K}]\le a^{\lambda_K-\log K}\to 0, \mbox{ as }
K\to\infty.$$
\end{proof}

This means that there is a very particulular scale, $O(\log K)$, at
which an interesting weak limit is obtained, whereas slower or faster
rates result in simpler convergences, as exhibited.

\section{Concluding remarks}

Measuring population or set size in density rather than numbers,
i.e. in capacity units, invites making the corresponding time change
into an intrinsic scale also with unit $K$,
$\bar X_t:= X_{tK} = Z_{[tK]}/K, t\ge 0$. For that process Theorem
\ref{main} yields that
 $$\bar X_0\leftarrow \bar X_{(\log K)/K} = X_{\log K}
\xrightarrow[K\to\infty]{ D} h\circ W(z_0).$$
Thus, the process in the intrinsic time scale seems to have started
from a random number of first elements, unless the variance of $W$ is
zero. Only in that case, corresponding to a completely deterministic
initial reproduction or replication process, can the the number $z_0$
of ancestors or corresponding originators be recovered behind the
random veil of history, by inversion of $h$ \cite{ChJagKle}. 

As mentioned, this article was sparked by the concrete problem of
finding the number of original templates in PCR and answering
questions about single- or multicell origin of cancers. It is,
however,  tempting to suspect that similar patterns of late observed
growth with unknown, seemingly random, origin may occur in many other
contexts.

\subsection*{Acknowledgement}
Research supported by ARC Grant DP150103588.  P. Jagers was further supported by the Knut och
Alice Wallenberg Foundation and F. C. Klebaner by ARC through DP150102758.


\begin{thebibliography}{10}


\bibitem{BHKK}  Barbour, A. D.; Hamza, K.; Kaspi, Haya; Klebaner, F. C. Escape from the boundary in Markov population processes. Adv. in Appl. Probab. 47 (2015), no. 4, 1190-1211.

\bibitem{Barbour}  A. D. Barbour, P. Chigansky, and F. C. Klebaner, On
  the emergence of random initial conditions in fluid limits. {\em J. Appl. Probab}, 2016.

\bibitem{B} Billingsley, Patrick. Convergence of probability measures. John Wiley \& Sons, 2013.
\bibitem{ChJagKle} P. Chigansky, P. Jagers, and F. C. Klebaner,
  \newblock What can be observed in real time PCR and when does it
  show?  {\em J. Math. Biol.} 76:3, 679-695  (2018)  

\bibitem{Haccou} P.\ Haccou, P.\ Jagers, and V. A.\ Vatutin, {\em Branching Processes: Variation, Growth, and Extinction of Populations}, p. 155. Cambridge University Press, Cambridge, 2005.
\bibitem{pj} P. Jagers, {\em Branching Processes with Biological Applications}, p. 33. John Wiley \& Sons, London, 1975
\bibitem{JagKlePCR} P.\ Jagers and F.\ Klebaner.
\newblock Random variation and concentration effects in {PCR}.
\newblock {\em J. Theoret. Biol.}, 224(3):299--304, 2003.
 
\bibitem{Klethr}
F.~C.\ Klebaner.
\newblock Population-dependent branching processes with a threshold.
\newblock {\em Stochastic Process. Appl.}, 46(1):115--127, 1993.

\bibitem{barebones}
F.~C.\ Klebaner, S.\ Sagitov, V.~A.\ Vatutin, P.\ Haccou and P.\ Jagers.
\newblock Stochasticity in the adaptive dynamics of evolution: the bare bones.
\newblock {\em J. Biol. Dyn.}, 5(2):147--162, 2011.

%
\bibitem{Kurtz}
Thomas~G. Kurtz.
\newblock Solutions of ordinary differential equations as limits of pure jump Markov processes. {\em J. Appl. Prob.}, 7:49--58, 1970.
 


 
  \end{thebibliography}
\end{document}